\theoremstyle{plain}
\newtheorem{thm}{Theorem}[section]
\newtheorem{theorem}[thm]{Theorem}
\newtheorem{lemma}[thm]{Lemma}
\newtheorem{proposition}[thm]{Proposition}
\theoremstyle{definition}
\newtheorem{definition}[thm]{Definition}
\numberwithin{equation}{section}
\newcommand{\C}{{\mathbb C}}
\newcommand{\BP}{{\mathbb P}}
\newcommand{\Q}{{\mathbb Q}}
\newcommand{\R}{{\mathbb R}}
\newcommand{\Z}{{\mathbb Z}}
\title [Primitive automorphisms]{Primitive automorphisms of a simple abelian variety}
\author{Keiji Oguiso}
\address{Mathematical Sciences, the University of Tokyo, Meguro Komaba 3-8-1, Tokyo, Japan and Korea Institute for Advanced Study, Hoegiro 87, Seoul, 
133-722, Korea}
\email{oguiso@ms.u-tokyo.ac.jp}
\thanks{The author is supported by JSPS Grant-in-Aid (S) No 25220701, JSPS Grant-in-Aid (S) 15H05738, JSPS Grant-in-Aid (A) 16H02141, JSPS Grant-in-Aid (B) 15H03611, and by KIAS Scholar Program.}
\begin{document}

\maketitle

\begin{abstract} We shall prove that an automorphism of a simple abelian variety is primitive if and only if it is of infinite order. 
\end{abstract}

\section{Introduction}

This note provides a supplementary result (Theorem \ref{thm1}) of my talk at the sixty-first Algebra Symposium of Mathematical Society of Japan, held at Saga University on September 7--10, 2016. My talk there was based on my previous \cite{Og16-2}. 

Throughout this note, the base field is assumed to be the complex number field 
$\C$. Let $M$ be a smooth projective variety of dimension $m \ge 2$ and $f \in {\rm Bir}\, (M)$. 

$f$ is said to be {\it imprimitive} if there are a smooth projective variety 
$B$ with $0 < \dim\, B < m$ and a dominant rational map $\pi : M \dasharrow B$ with connected fibers such that $\pi$ is $f$-equivariant, i.e., there is $f_B \in {\rm Bir}\, (B)$ satisfying $\pi \circ f = f_B \circ \pi$. As $\pi$ is just a rational dominant map, smoothness assumption of $B$ is harmless by Hironaka resolution of singularities (\cite{Hi64}). We say that $f$ is {\it primitive} if it is not imprimitive. 

The notion of primitivity is introduced by De-Qi Zhang \cite{Zh09}. Note that if $f$ is primitive, then ${\rm ord}\, (f) = \infty$. Indeed, otherwise, the invariant field $\C(M)^{f^*}$ is of the same transcendental degree $m$ as the rational function field $\C(M)$. Thus we have $\varphi \in \C(M)^{f} \setminus \C$ as $m \ge 1$. Then the Stein factorization of $\varphi : M \dasharrow \BP^1$ is $f$-equivariant. $f$ is then imprimitive as $m \ge 2$.

Assume that $f \in {\rm Aut}\, (M)$. The {\it topological entropy} $h_{{\rm top}}(f)$ of $f$ is a fundamental quantity measuring the complexity of the orbit behaviour under $f^n$ ($n \ge 0$). Let $r_{p}$ be the spectral radius of $f^{*}|H^{p, p}(M)$. Then, by Gromov-Yomdin's theorem, $h_{{\rm top}}(f)$ satisfies 
$$0 \le h_{{\rm top}}(f) = \log {\rm max}_{0 \le p \le m} r_p(f)\,\, $$ 
In this note, it is harmless to regard this formula as the definition of $h_{{\rm top}}(f)$ (See eg. \cite{Og15} and references therein for details).  

The aim of this note is to remark the following:

\begin{theorem}\label{thm1} Let $A$ be a simple abelian variety of dimension $m \ge 2$ and $f \in {\rm Aut}\, (A)$. Then $f$ is primitive if and only if 
${\rm ord}\, (f) = \infty$. In particular, the translation automorphism $t_a$ ($a \in A$) defined by $x \mapsto x +a$ is primitive if $a$ is a non-torsion point of $A$ with fixed zero. Moreover, if in addition $A$ is of CM type, then $A$ admits a primitive automorphism of positive entropy, possibly after replacing $A$ by an isogeny. 
\end{theorem}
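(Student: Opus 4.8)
The nontrivial implication is that ${\rm ord}(f)=\infty$ forces $f$ to be primitive, so I would prove the contrapositive: an imprimitive automorphism of a simple $A$ has finite order. Thus suppose $\pi:A\dasharrow B$ is an $f$-equivariant dominant rational map with connected fibres and $0<\dim B<m$, and let $f_B\in\operatorname{Bir}(B)$ be the induced map. The guiding idea is to upgrade the mere $f$-equivariance of $\pi$ to genuine $f$-\emph{invariance} of an auxiliary fibration, after which the finiteness of ${\rm ord}(f)$ will drop out of the finiteness of the automorphism group of a variety of general type.

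First I would pin down the fibres. By Ueno's structure theorem for subvarieties of abelian varieties, a general fibre $F\subset A$ satisfies that $F/B(F)$ is of general type, where $B(F)=\{a\in A: a+F=F\}$ is its stabiliser. Since $A$ is simple, $B(F)\in\{0,A\}$, and $B(F)=A$ is impossible as $F\ne A$; hence $B(F)=0$ and $F$ is of general type. Because $f$ carries $F_b$ isomorphically onto $F_{f_B(b)}$, the moduli map $\mu:B\dasharrow\sM$ to the moduli of these general-type fibres is $f_B$-invariant. The essential point is that $\mu$ is non-constant: were the family isotrivial, splitting the generic fibre over a finite cover $B'\to B$ would produce a variety $\tilde A=A\times_B B'$ admitting a generically finite dominant rational map to $A$ and birational to $F_0\times B'$, so that $\operatorname{Alb}(\tilde A)=\operatorname{Alb}(F_0)\times\operatorname{Alb}(B')$ would surject onto $\operatorname{Alb}(A)=A$; but each factor has dimension $<m$, so by simplicity each maps to $0$, a contradiction.

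Composing and Stein-factorising, $\rho:=\mu\circ\pi:A\dasharrow B_1$ is then an $f$-invariant fibration on which $f$ acts trivially on the base, with $0<\dim B_1<m$; applying the same Ueno argument to the fibres $G$ of $\rho$ shows they too are of general type. Now $f$ fixes $B_1$ pointwise and preserves every fibre $G$, hence acts on the generic fibre $G_\eta$ as an automorphism over $\C(B_1)$; as $G_\eta$ is of general type, $\operatorname{Aut}_{\C(B_1)}(G_\eta)$ is finite, forcing ${\rm ord}(f)<\infty$. This gives the equivalence, and in particular $t_a$ for non-torsion $a$, having infinite order, is primitive. I expect the non-constancy of $\mu$ (the rigidity ruling out isotrivial fibrations) to be the main obstacle, as that is precisely where simplicity must be played against the general-type geometry.

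For the final assertion, after an isogeny I may assume $\operatorname{End}(A)=\sO_K$ for the CM field $K$ with $[K:\Q]=2m$. Dirichlet's unit theorem gives $\rk\,\sO_K^{\times}=m-1\ge1$, so there is a unit $u$ that is not a root of unity, and the associated $h_u\in\operatorname{Aut}(A)$ then has infinite order, hence is primitive by the equivalence just proved. Its eigenvalues on $H^1(A,\C)$ are the conjugates $\sigma(u)$, and by Kronecker's theorem together with the product formula $\prod_\sigma|\sigma(u)|=1$ some $|\sigma(u)|>1$; thus the spectral radius of $h_u^{*}$ on $H^{1,1}(A)$ exceeds $1$ and $h_u$ has positive entropy.
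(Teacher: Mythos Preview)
Your strategy for the equivalence---upgrade the $f$-equivariant fibration to an $f$-invariant one via a moduli map, then use finiteness of automorphisms of the general-type generic fibre---is exactly the mechanism underlying the paper's argument, but your execution has a real gap at the step you yourself flag as the obstacle. The claim that ``each factor has dimension $<m$'' in the Albanese decomposition is false. Indeed, since $F_0\subset A$ is positive-dimensional and $A$ is simple, the inclusion $F_0\hookrightarrow A$ induces a \emph{surjection} $\operatorname{Alb}(F_0)\to A$ (its image is the abelian subvariety generated by differences in $F_0$, which must be all of $A$), so $\dim\operatorname{Alb}(F_0)\ge m$, not $<m$. Thus the product $\operatorname{Alb}(F_0)\times\operatorname{Alb}(B')$ surjecting onto $A$ yields no contradiction, and your isotriviality argument collapses. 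Ruling out isotrivial general-type fibrations on a simple abelian variety is genuinely delicate, and this is precisely the content the paper imports: it invokes Amerik--Campana to produce directly an $f$-\emph{invariant} fibration whose fibres are orbit closures (giving Zariski-dense orbits once one runs your generic-fibre finiteness argument), and then cites Bianco's result that an $f$-equivariant fibration with general-type fibres forces non-dense orbits. The tension between these two statements gives primitivity. Your argument would reprove Bianco's step by hand if the isotriviality case were handled, but as written it does not.

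Your CM argument, by contrast, is correct and in fact more economical than the paper's. The paper passes to the totally real subfield, invokes the existence of a Pisot unit generating it, and reads off the entropy from the Pisot property. Your route---Dirichlet gives a unit of infinite order in $\sO_E^\times$, Kronecker's theorem forces some archimedean valuation to exceed $1$, and $|\sigma(u)|^2$ then appears as an eigenvalue on $H^{1,1}$---reaches the same conclusion with only the unit theorem and Kronecker, avoiding the Pisot machinery entirely.
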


Here and hereafter, an abelian variety $A = \C^m/\Lambda$ is said to be {\it simple} if $A$ has no abelian subvariety $B$ such that $0 < \dim\, B < \dim\, A$. A simple abelian variety $A$ is called {\it of CM type} if the endomorphism ring $E := {\rm End}_{{\rm group}}(A) \otimes \Q$ is a CM field with $[E: \Q] = 2\dim\, A$. By definition, a field $E$ is a {\it CM field} if $E$ is a totally imaginary quadratic extension of a totally real number field $K$. Note that if an abelian variety $B$ is isogenous to a simple abelian variety of CM type, then so is $B$ with the same endomorphism ring as $A$. However, ${\rm Aut}_{{\rm group}}\, (A) \not\simeq {\rm Aut}_{{\rm group}}\, (B)$ in general (even for elliptic curves of CM type). 

The "only if" part of Theorem \ref{thm1} is clear as already remarked. Theorem \ref{thm1} is a generalization of our earlier work \cite[Theorem 4.3]{Og16-2}. The last statement of Theorem \ref{thm1} gives an affrimative answer to a question asked by Gongyo at the symposium. 

Our proof is a fairly geometric one based on works due to Amerik-Campana \cite{AC13} and Bianco \cite{Bi16} and is in some sense close to \cite{Og16-3}. 

{\bf Acknowledgement.} I would like to express my thanks to Professors Tomohide Terasoma, Kota Yoshioka and Fumiharu Kato for their invitation to the symposium, Professor Yoshinori Gongyo for his inspiring question there and Professor Akio Tamagawa for his interest in this work and valuable e-mail correspondence. 

\section{Proof of Theorem \ref{thm1}.}

Let $A$ be a simple abelian variety of dimension $m \ge 2$ and $f \in {\rm Aut}\, (A)$ such that ${\rm ord}\, (f) = \infty$. We first show that $f$ is primitive.  

The following two well-known propositions will be frequently used: 

\begin{proposition}\label{prop21}
Let $V$ be a subvariety of $A$ such that $\dim\, V < m = \dim\, A$ and $\tilde{V}$ is a Hironaka resolution of $V$. Then $\tilde{V}$ is of general type. 
\end{proposition}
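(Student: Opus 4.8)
The plan is to reduce the statement to Ueno's structure theorem for subvarieties of an abelian variety, using simplicity of $A$ to force the translation stabilizer of $V$ to be trivial (we may assume $\dim\, V \ge 1$, a point being trivially of general type). First I would attach to $V$ its translation stabilizer
\[
B := \left( \{\, a \in A \mid t_a(V) = V \,\} \right)^{0},
\]
the identity component of the subgroup of those $a \in A$ for which $t_a \colon x \mapsto x+a$ preserves $V$. This $B$ is an abelian subvariety of $A$, and by construction $V$ is invariant under translation by $B$; hence $V$ is a union of $B$-cosets and the quotient $q \colon A \to A/B$ maps $V$ onto a subvariety $\bar V := q(V)$ with $\dim\, V = \dim\, \bar V + \dim\, B$, exhibiting $V$ as a fibre bundle over $\bar V$ with fibre a translate of $B$.

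Next I would use simplicity to show $B = \{0\}$. Indeed, if $\dim\, B > 0$, then, $A$ being simple, the abelian subvariety $B$ must equal $A$; but then $V + A = V$, which forces $V = A$ (as $V \ne \emptyset$), contradicting $\dim\, V < m$. Therefore $\dim\, B = 0$, the stabilizer is finite, and $q|_V \colon V \to \bar V$ is generically finite with $\dim\, \bar V = \dim\, V$.

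Finally I would invoke Ueno's theorem: for a subvariety $V$ of an abelian variety, the Iitaka fibration of a resolution $\tilde V$ is induced by the quotient $A \to A/B$ by the stabilizer $B$, and the image $\bar V \subset A/B$ is of general type; in particular $\kappa(\tilde V) = \dim\, V - \dim\, B$. Since $\dim\, B = 0$ this yields $\kappa(\tilde V) = \dim\, V = \dim\, \tilde V$, i.e. $\tilde V$ is of general type.

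The only substantive ingredient is Ueno's theorem itself, and its hard direction — producing a general-type base $\bar V$ once the stabilizer is known to be finite — is where the real work sits; the reduction via simplicity is immediate. A more self-contained alternative would exploit the triviality of $\Omega^{1}_{A}$, which makes $\omega_{\tilde V}$ nef and globally generated, and then upgrade nefness to bigness from the finiteness of the stabilizer via the Gauss map; but citing Ueno is the cleanest route.
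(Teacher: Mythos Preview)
Your argument is correct and is essentially the same as the paper's: the paper simply cites Ueno's \cite[Corollary 10.10]{Ue75} directly, whereas you unpack that corollary by invoking Ueno's structure theorem and using simplicity of $A$ to force the translation stabilizer $B$ to be trivial. The substance is identical; you have just made explicit the one-line deduction from Ueno's general result to the simple case.
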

\begin{proof} See \cite[Corollary 10.10]{Ue75}.
\end{proof}

\begin{proposition}\label{prop22}
Let $M$ be a smooth projective variety of general type defined over a field $k$ of characteristic $0$. Then the birational automorphism group ${\rm Bir}\, (M/k)$ of $M$ over $k$ is a finite group
\end{proposition}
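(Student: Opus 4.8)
The plan is to linearise the problem through the pluricanonical representation and then exploit the rigidity of general type varieties, namely the absence of nonzero global vector fields. First I would reduce to the case that $k$ is algebraically closed: since ${\rm Bir}\,(M/k)$ injects into ${\rm Bir}\,(M_{\bar{k}}/\bar{k})$ by base change, it suffices to bound the latter, and every step below is valid over an arbitrary algebraically closed field of characteristic $0$ (both Hironaka resolution and the vanishing of vector fields hold there).

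Since $M$ is of general type, $\kappa(M) = \dim M$, so for all sufficiently large $m$ the $m$-canonical map $\phi := \phi_{|mK_M|}$ is birational onto its image $W := \overline{\phi(M)} \subseteq \BP^N$; I fix such an $m$. A birational map between smooth projective varieties induces an isomorphism on spaces of pluricanonical forms, so each $g \in {\rm Bir}\,(M)$ determines a linear automorphism of $H^0(M, mK_M)$ and hence an element $\rho(g) \in PGL(H^0(M, mK_M))$, characterised by the intertwining relation $\phi \circ g = \rho(g)\circ \phi$. This $\rho$ is a homomorphism, and it is injective: if $\rho(g) = {\rm id}$ then $\phi \circ g = \phi$, whence $g = {\rm id}$ because $\phi$ is generically injective. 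The same relation shows that $\rho(g)$ preserves $W$, so the image $\Gamma := \rho({\rm Bir}\,(M))$ lies in the stabiliser $G := \{ h \in PGL_{N+1} : h(W) = W \}$, a closed subgroup of $PGL_{N+1}$, i.e. a linear algebraic group. Thus ${\rm Bir}\,(M) \simeq \Gamma \subseteq G$, and everything reduces to proving that $G$ is finite.

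The crux is to show $\dim G = 0$, and here I would argue by contradiction. If $\dim G \ge 1$, the identity component $G^{0}$ is a positive-dimensional connected algebraic group acting biregularly on $W$, and this action is faithful because $W$ spans $\BP^N$ (being the image of a complete linear system). Taking a $G^{0}$-equivariant resolution $\widetilde{W} \to W$, which exists in characteristic $0$, the group $G^{0}$ acts biregularly on the smooth projective variety $\widetilde{W}$, and $\widetilde{W}$ is birational to $M$, hence again of general type. Differentiating the action gives an injection ${\rm Lie}\,(G^{0}) \hookrightarrow H^0(\widetilde{W}, T_{\widetilde{W}})$, so $H^0(\widetilde{W}, T_{\widetilde{W}}) \ne 0$. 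This contradicts the fundamental fact that a smooth projective variety of general type has no nonzero global vector field (equivalently, that ${\rm Aut}^{0}$ is trivial). Therefore $\dim G = 0$, and a $0$-dimensional algebraic group over an algebraically closed field is finite; hence ${\rm Bir}\,(M) \simeq \Gamma \subseteq G$ is finite.

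The step I expect to be the main obstacle, and the one carrying the whole argument, is the vanishing $H^0(\widetilde{W}, T_{\widetilde{W}}) = 0$ for varieties of general type, together with the care required to pass from the possibly singular image $W$ to a smooth equivariant model without destroying the group action. The remaining ingredients — eventual birationality of the pluricanonical maps, the functoriality $g \mapsto \rho(g)$ on pluricanonical forms, and the finiteness of a $0$-dimensional algebraic group — are standard; it is the absence of infinitesimal automorphisms that ultimately forces finiteness.
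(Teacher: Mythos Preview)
Your argument is correct and is essentially the classical Matsumura-type proof: embed ${\rm Bir}(M)$ into the projective stabiliser of the pluricanonical image and then kill the identity component via the vanishing of global vector fields on a smooth model of general type. The only point worth flagging is that the ``fundamental fact'' $H^0(\widetilde{W},T_{\widetilde{W}})=0$ is itself a nontrivial theorem (Matsumura; cf.\ also \cite[Chapter~14]{Ue75}), and you correctly single it out as the load-bearing step rather than sweep it under the rug.

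By contrast, the paper does not give an argument at all: its entire proof is ``by the Lefschetz principle, we may reduce to \cite[Corollary~14.3]{Ue75}.'' So you have supplied precisely the content that the paper outsources to Ueno's book. What you gain is a self-contained explanation of \emph{why} finiteness holds and where the general-type hypothesis enters (through the absence of infinitesimal automorphisms); what the paper's one-line citation gains is brevity appropriate for a result that is standard in the literature and only used as a black box in the subsequent arguments.
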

\begin{proof} By the Lefschetz principle, we may reduce to \cite[Corollary 14.3]{Ue75}.
\end{proof}

\begin{lemma}\label{lem21} Let $P$ be a very general closed point of $A$. Then the $\langle f \rangle$-orbit $\{f^n(P)\, |\, n \in \Z\}$ of $P$ is Zariski dense in $A$. 
\end{lemma}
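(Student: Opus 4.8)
The plan is to reduce the statement to a dimension count of the $f$-invariant subvarieties of $A$. First I would observe that for any point $P$ the Zariski closure $W := \overline{\{f^n(P)\, |\, n \in \Z\}}$ is an $f$-invariant closed subvariety, since the orbit itself is $\langle f \rangle$-stable; conversely $P$ has non-dense orbit precisely when $P$ lies in such a proper $f$-invariant $W$ (namely $W = \overline{\{f^n(P)\}}$). Thus it suffices to show that the locus of points with non-dense orbit is contained in a countable union of proper closed subsets, so that a very general $P$ automatically avoids it.

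The heart of the argument is to prove that $A$ has \emph{no} positive-dimensional proper $f$-invariant closed subvariety. Write $f = t_a \circ g$ with $a = f(0)$ and $g \in {\rm Aut}_{{\rm group}}(A)$ (using rigidity: a morphism fixing $0$ is a homomorphism), so that $f^n(x) = g^n(x) + s_n$ with $s_n := \sum_{i=0}^{n-1} g^i(a)$. Suppose $W \subsetneq A$ is irreducible and $f$-invariant with $\dim W \ge 1$. By Proposition \ref{prop21} a resolution $\tilde{W}$ is of general type, and $f|_W$ is an automorphism of $W$, hence lifts to an element of ${\rm Bir}(\tilde{W})$, which is a finite group by Proposition \ref{prop22}. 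Therefore $f^k|_W = {\rm id}$ for some $k \ge 1$, i.e. $W \subseteq {\rm Fix}(f^k)$.

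It then remains to see that ${\rm Fix}(f^k)$ is finite, contradicting $\dim W \ge 1$. Since ${\rm Fix}(f^k) = \{x \in A\, |\, (g^k - {\rm id})(x) = -s_k\}$ is either empty or a coset of $\ker(g^k - {\rm id})$, I would split into two cases. If $g^k \ne {\rm id}$, then $g^k - {\rm id}$ is a nonzero element of $\End(A)$, so its kernel is finite because $A$ is simple (its identity component is an abelian subvariety, hence $0$ or $A$, and it cannot be $A$ lest $g^k = {\rm id}$). If instead $g^k = {\rm id}$, then $f^k = t_{s_k}$ is a translation; as ${\rm ord}(f) = \infty$ forces $f^k \ne {\rm id}$, we have $s_k \ne 0$, and $t_{s_k}$ has no fixed point at all, contradicting $\emptyset \ne W \subseteq {\rm Fix}(f^k)$. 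Either way $\dim W = 0$.

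Consequently the orbit closure of every point is either all of $A$ or a finite set, the latter occurring exactly for the periodic points $\bigcup_{k \ge 1} {\rm Fix}(f^k)$, which is a countable set since each ${\rm Fix}(f^k)$ is finite. A very general $P$ lies outside this countable union of proper closed subsets, so $\overline{\{f^n(P)\, |\, n \in \Z\}} = A$. The step I expect to be the main obstacle is the finiteness of the fixed loci ${\rm Fix}(f^k)$: this is precisely where both hypotheses are indispensable — simplicity of $A$ to force $\ker(g^k - {\rm id})$ finite, and ${\rm ord}(f) = \infty$ to exclude a fixed-point-free translation power — and it is what prevents a positive-dimensional family of invariant subvarieties from sweeping out $A$.
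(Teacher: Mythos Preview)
Your argument is correct and takes a genuinely different, more elementary route than the paper. The paper invokes the Amerik--Campana theorem \cite{AC13} to produce a rational fibration $\rho : A \dasharrow B$ with $\rho \circ f = \rho$ whose general fibre is the orbit closure of a very general point, and then works over the generic point of $B$: the generic fibre is of general type by Proposition~\ref{prop21}, so $f$ acts on it through a finite group by Proposition~\ref{prop22} applied over $\C(B)$, forcing $f^n = {\rm id}$ on all of $A$. You instead bypass Amerik--Campana entirely by analysing individual $f$-invariant subvarieties: the same two propositions force any such $W$ into ${\rm Fix}(f^k)$, and then the decomposition $f = t_a \circ g$ together with simplicity of $A$ pins down ${\rm Fix}(f^k)$ as finite. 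Both proofs share the general-type/finite-${\rm Bir}$ core, but yours is more hands-on and in fact yields the stronger conclusion that \emph{every} non-periodic point (not merely a very general one) has Zariski-dense orbit; the paper's route, on the other hand, generalises more readily to settings without the explicit affine structure of automorphisms of abelian varieties. One small point to tidy: the orbit closure need not be irreducible a priori, so when you pass to an irreducible $W$ you should note that $f$ permutes the finitely many irreducible components of the orbit closure, replace $f$ by a power stabilising one of them, and run your argument for that power (which still has infinite order).
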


\begin{proof} As $P$ is very general, $f^n$ is defined at $P$ for all $n \in \Z$. By \cite[Th\'eor\`eme 4.1]{AC13}, there is a smooth projective variety $B$ and a dominant rational map $\rho : A \dasharrow B$ such that $\rho \circ f = \rho$ and $\rho^{-1}(\rho(P))$ is the Zariski closure of $\langle f \rangle$-orbit of $P$. It suffices to show that $\dim B = 0$. {\it In what follows, assume to the contray that $\dim\, B > 0$, we derive a contradiction.} 

Let $\eta \in B$ be the generic point in the sense of scheme and $A_{\eta}$ be the fiber over $\eta$. Then by Proposition \ref{prop21} and specialization, a Hironaka resolution of each irreducible component of $A_{\eta}$ is of general type over $\C(B)$. By $\rho \circ f = \rho$, $f$ faithfully acts on $A_{\eta}$ over $\C(B)$. Thus, by Proposition \ref{prop22}, $f^n = id$ on $A_{\eta}$ for some positive integer $n$. Thus $f^n = id$ on $A$, as the generic point $\eta_{A}$ of $A$ is in $A_{\eta}$. This contradicts to ${\rm ord}\, f = \infty$. 
\end{proof}

The following general, useful proposition is due to Bianco: 

\begin{proposition}\label{prop23} Let $X$ be a projective variety and $g \in {\rm Bir}\, (X)$. Assume that 
$\pi : X \dasharrow B$ is a $g$-equivariant dominant rational map to a smooth projective variety $B$ with $\dim\, B < \dim\, X$. Assume that a Hironaka resolution $\tilde{X}_b$ of the fiber $X_b$ is of general type for a general closed point $b \in B$. Then for any very general closed point $P \in X$, the $\langle g \rangle$-orbit $\{g^n(P)| n \in \Z\}$ of $P$ is never Zariski dense in $X$.  
\end{proposition}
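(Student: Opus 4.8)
The plan is to analyse how $g$ permutes the fibers of $\pi$ and to exploit the birational rigidity of varieties of general type. Write $g_B \in {\rm Bir}\,(B)$ for the map with $\pi \circ g = g_B \circ \pi$, so that $\pi(g^n(P)) = g_B^n(b)$ where $b := \pi(P)$, and $g^n$ restricts to a birational map $X_b \dasharrow X_{g_B^n(b)}$ between fibers. Since we only need the conclusion for very general $P$, I would shrink $B$ to a dense open $U$ over which $\pi$ is a morphism with fibers of general type, assume $b = \pi(P) \in U$ with $g^n$ defined at $P$ for all $n$ (the orbit of a very general $b$ stays in any prescribed dense open locus, this being a countable intersection of conditions), and replace $X$ by the relative canonical model $\mathcal{X}^c \to U$, which is $g$-equivariantly birational to $X$ over $U$; note the relative dimension is $\dim\, X - \dim\, B \ge 1$.

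First I would observe that the canonical model of a fiber is constant along a $g_B$-orbit. Indeed, birational varieties of general type share the same canonical ring, so the birational map $g^n : X_b \dasharrow X_{g_B^n(b)}$ induces a biregular isomorphism $(X_b)^c \xrightarrow{\sim} (X_{g_B^n(b)})^c$ of canonical models. Passing to a moduli map $\mu : U \dasharrow \sM$ into a quasi-projective variety parametrizing the isomorphism classes of these canonical models, this reads $\mu \circ g_B = \mu$; that is, $g_B$ preserves each fiber of $\mu$, and in particular the whole $g_B$-orbit of $b$ lies in the single fiber $\mu^{-1}(\mu(b))$. If $\mu$ is non-constant, this already finishes the argument: for very general $b$ the set $S := \mu^{-1}(\mu(b))$ is a proper closed subset of $B$ containing the orbit of $b$, whence the orbit of $P$ lies in $\pi^{-1}(S) \subsetneq X$ and is not Zariski dense.

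The remaining, and genuinely harder, case is the isotrivial one, where $\mu$ is constant and all fibers are isomorphic to a fixed canonical model $Y_0$ of general type. Here the decisive input is that $G := {\rm Aut}\,(Y_0)$ is finite (this is Proposition \ref{prop22} applied on the canonical model, where ${\rm Bir} = {\rm Aut}$). The cleanest route I see is the generic-fiber picture: over $K = \C(B)$ the generic fiber $Y$ is of general type, and $g$ induces a $\sigma$-semilinear birational self-map of $Y$, where $\sigma := g_B^* \in {\rm Aut}\,(K/\C)$. After trivializing $Y \cong Y_0 \otimes_{\C} K$, this self-map has the form $\beta \cdot \sigma$ with $\beta \in {\rm Aut}\,(Y_0) = G$, because $Y_0$ and its finite automorphism group are already defined over $\C$ and hence fixed by $\sigma$. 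Consequently $g^n$ acts on the $Y_0$-factor through $\beta^n$, which runs through the finite set $\{\beta^i\}_{0 \le i < {\rm ord}(\beta)}$; thus $g^n(P)$ moves its $Y_0$-coordinate within a finite set, so the orbit of $P$ is confined to the preimage of finitely many horizontal slices, a proper subvariety of $X$.

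I expect the main obstacle to be precisely this isotrivial case, and within it the bookkeeping rather than the idea: one must set up the relative canonical model and the moduli map $\mu$ rigorously (including the existence of a separating parameter variety $\sM$ for canonical models of general type), and then make precise the trivialization $Y \cong Y_0 \otimes_{\C} K$ together with the identification of a $\sigma$-semilinear birational self-map of $Y_0 \otimes_\C K$ with an element $\beta \cdot \sigma$, $\beta \in G$. The finiteness of $G$ is what rules out a fiber-direction motion of infinite order and forces $\beta$ to be the only moving part; everything then reduces to the triviality that $\beta^n$ takes finitely many values. By contrast the non-isotrivial case and the general reductions are routine once the relative canonical model is in place.
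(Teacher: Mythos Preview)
The paper gives no argument of its own here; it simply cites Bianco \cite{Bi16} (with a pointer to \cite{Og16-3} for a clarification), so there is no in-house proof to compare line by line. Your strategy---pass to the relative canonical model, use a moduli map $\mu$ for canonically polarized varieties to isolate the isotrivial case, then exploit finiteness of ${\rm Aut}(Y_0)$---is the right shape and is essentially the route taken in that reference.

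The substantive gap is in the isotrivial step, and it is not mere bookkeeping. An isotrivial family of general-type varieties need not have generic fiber isomorphic to $Y_0 \otimes_\C K$: the generic fiber is in general only a \emph{twisted form} of $Y_0$ over $K=\C(B)$, governed by the $G$-torsor $I := {\rm Isom}_U(Y_0\times U,\, \mathcal X^c)$, which is typically non-trivial (already for curves of genus $\ge 2$ with non-trivial automorphisms over a base with non-trivial \'etale fundamental group). So the trivialization $Y\cong Y_0\otimes_\C K$ you invoke may fail outright, and with it the decomposition $g=\beta\cdot\sigma$. The clean repair is to avoid trivializing: the contracted-product description $\mathcal X^c|_U \cong I\times^G Y_0$ yields a canonical morphism $q:\mathcal X^c|_U \to Y_0/G$, $[(i,y)]\mapsto [y]$, and one checks that $g$ acts on $I\times^G Y_0$ through the $I$-factor alone, so $q\circ g=q$; since $\dim(Y_0/G)=\dim X-\dim B\ge 1$, the orbit of a very general $P$ lies in the proper closed set $\overline{q^{-1}(q(P))}$. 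Equivalently, replace $g$ by a power so that $g_B$ lifts to a trivializing finite \'etale cover $B'\to B$; your $\beta\cdot\sigma$ computation then runs verbatim on $Y_0\times B'$, and the conclusion for $g$ follows since a $\langle g\rangle$-orbit is a finite union of $\langle g^N\rangle$-orbits.
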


\begin{proof} See \cite[Section 4]{Bi16}. See also \cite[Remark 2.6]{Og16-3} for a minor clarification. 
\end{proof}
The next proposition completes the first part of Theorem \ref{thm1}: 
\begin{proposition}\label{prop24} Let $A$ be a simple abelian variety of dimension $\ge 2$ and $f$ be an automorphism of $A$ of infinite order. Then $f$ is primitive. 
\end{proposition}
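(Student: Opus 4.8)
The plan is to derive a contradiction by playing Lemma \ref{lem21} against Bianco's Proposition \ref{prop23}, which predict opposite behaviour of very general orbits. Assume to the contrary that $f$ is imprimitive. By definition there exist a smooth projective variety $B$ with $0 < \dim\, B < m$ and an $f$-equivariant dominant rational map $\pi : A \dasharrow B$ with connected fibers; let $f_B \in {\rm Bir}\, (B)$ satisfy $\pi \circ f = f_B \circ \pi$.

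First I would examine a general fiber. Since $\pi$ is a dominant rational map defined on a dense open subset of $A$, a general closed point $b \in B$ has fiber $A_b = \pi^{-1}(b)$ which is a genuine subvariety of $A$, and its dimension equals $m - \dim\, B$. The constraint $0 < \dim\, B < m$ forces $0 < \dim\, A_b < m$, so $A_b$ is a proper, positive-dimensional subvariety of $A$. Proposition \ref{prop21} then applies verbatim to each such general fiber, giving that a Hironaka resolution $\widetilde{A_b}$ is of general type.

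With this in hand we are exactly in the setting of Proposition \ref{prop23}, taking $X = A$ and $g = f$: the map $\pi$ is $f$-equivariant, dominant onto $B$ with $\dim\, B < \dim\, A$, and the general fiber has a general-type resolution. Proposition \ref{prop23} therefore yields that for \emph{any} very general closed point $P \in A$, the $\langle f \rangle$-orbit of $P$ is never Zariski dense in $A$. This contradicts Lemma \ref{lem21}, which asserts that for a very general $P$ the $\langle f \rangle$-orbit \emph{is} Zariski dense. Since both statements concern very general points, the contradiction is immediate, and we conclude that $f$ is primitive.

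The only point requiring care — and the step I expect to be the main obstacle — is the verification that the general fibers of $\pi$ are honest subvarieties of $A$ of dimension strictly between $0$ and $m$, so that Proposition \ref{prop21} may be invoked. This hinges on nothing more than the numerical inequality $0 < \dim\, B < m$ coming from imprimitivity, together with the fact that $\pi$, being dominant, is defined over a dense open locus; but one should note that it is the \emph{simplicity} of $A$ that guarantees the fibers are not themselves subgroup translates trivializing the argument, and that the notions of ``general'' in Proposition \ref{prop23} and ``very general'' in Lemma \ref{lem21} are compatible for the purpose of reaching the contradiction.
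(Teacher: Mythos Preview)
Your argument is correct and matches the paper's proof essentially line for line: assume an $f$-equivariant fibration with $0<\dim B<\dim A$, use Proposition~\ref{prop21} to see that general fibers are of general type, apply Proposition~\ref{prop23} to conclude very general orbits are not dense, and contradict Lemma~\ref{lem21}. Your closing remark about simplicity is slightly off-target---simplicity enters only through Proposition~\ref{prop21} (proper subvarieties of a \emph{simple} abelian variety are of general type), not through any separate concern about subgroup translates---but this does not affect the validity of the proof.
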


\begin{proof} Let $\pi : A \dasharrow B$ be an $f$-equivariant dominant rational map to a smooth projective variety $B$ with $\dim\, B < \dim\, A$ and with connected fibers. If $\dim\, B > 0$, then by Proposition \ref{prop21}, a Hironaka resolution $\tilde{A}_b$ of the fiber $A_b$ over $b \in B$ is of general type for general $b \in B$. Then, by Proposition \ref{prop23}, the $\langle f \rangle$-orbit of a very general closed point $P \in A$ is not Zariski dense. This contradicts to Lemma \ref{lem21}. Thus $\dim\, B = 0$, i.e., $f$ is primitive. 
\end{proof}

We shall show the last part of Theorem \ref{thm1}. 

Let $A$ be a simple abelian variety of CM type of dimension $m \ge 2$. We write $E := {\rm End}_{{\rm group}}(A) \otimes \Q$. Then by definition, $E$ is a totally imaginary quadratic extension of a totally real number field $K$ with $[K : \Q] = m \ge 2$. First we make $A$ explicit up to isogeny. As $E$ is a totally imaginary field with $[E:\Q] = 2m$, there are exactly $2m$ different complex embeddings $\varphi_i : E \to \C$ ($1 \le i \le 2m$) such that $\varphi_{2m-i} = \overline{\varphi_i}$. Here $-$ is the complex conjugate of $\C$. Note that there are exactly $2^m\cdot m!$ ways of numberings $I$ of the embeddings here. Choosing one such numbering $I$, we consider the embedding:
$$\varphi_I := (\varphi_1, \varphi_2, \cdots , \varphi_m) : E \to \C^m\,\, ;\,\, a \mapsto (\varphi_1(a), \varphi_2(a), \ldots , \varphi_m(a))\,\, .$$
Let $O_{E}$ (resp. $O_K$) be the integral closure of $\Z$ in $E$ (resp. in $K$). Then
$$B_I := \C^m/\varphi_I(O_E)$$
is an abelian variety and $A$ is isogenous to $B_I$ for some numbering $I$ (See eg. \cite[Chapter I, Section 3]{Mi06}). 

From now, we shall prove that the abelian variety $B := B_I$ admits an automorphism of positive entropy. 

\begin{definition}\label{def21} Let $\overline{\Q}$ be the algebraic closure of $\Q$ in $\C$, $\overline{\Z}$ be the integral closure of $\Z$ in 
$\overline{\Q}$ and $\overline{\Z}^{\times}$ be the unit group of the ring $\overline{\Z}$. A real algebraic integer is an element of $\overline{\Z} \cap \R$. A real algebraic integer $\alpha$ is called a {\it Pisot number} if $\alpha > 1$ and $|\alpha'| < 1$ for all Galois conjugates $\alpha' \not= \alpha$ of $\alpha$ over $\Q$. A Pisot number $\alpha$ is called a {\it Pisot unit} if $\alpha \in \overline{\Z}^{\times}$.
\end{definition} 

Then, by \cite[Theorem 5.2.2]{BDGPS92}, we have

\begin{theorem}\label{thm21}
For any real number field $L$, there is a Pisot unit $\alpha \in L$ such that 
$L = \Q(\alpha)$. 
\end{theorem}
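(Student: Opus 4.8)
The plan is to prove the existence of such a Pisot unit directly from Dirichlet's unit theorem, reading the Pisot conditions logarithmically. Let $d = [L : \Q]$ and let $\sigma_{1}, \ldots, \sigma_{d} : L \to \R$ be the $d$ real embeddings of the totally real field $L$. First I would recall that, by Dirichlet's unit theorem, the image of the unit group $\mathcal{O}_{L}^{\times}$ under the logarithmic embedding
$$\lambda : \mathcal{O}_{L}^{\times} \to \R^{d}, \quad u \mapsto (\log|\sigma_{1}(u)|, \ldots, \log|\sigma_{d}(u)|)$$
is a lattice of full rank $d - 1$ in the hyperplane $H = \{x \in \R^{d} : x_{1} + \cdots + x_{d} = 0\}$.

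The key geometric observation is then the following. Consider the open cone
$$C = \{x \in H : x_{i} < 0 \text{ for all } 2 \le i \le d\}.$$
Since $x_{1} + \cdots + x_{d} = 0$, membership in $C$ forces $x_{1} > 0$ automatically, and $C$ is a non-empty, full-dimensional open subset of $H$ (it contains $(d-1, -1, \ldots, -1)$). A full-rank lattice in $H$ meets every non-empty full-dimensional open cone in $H$, so I would choose a unit $u \in \mathcal{O}_{L}^{\times}$ with $\lambda(u) \in C$; equivalently, $|\sigma_{1}(u)| > 1$ and $|\sigma_{i}(u)| < 1$ for all $i \ge 2$. Replacing $u$ by $-u$ if necessary, which leaves all of these absolute values unchanged, I may assume $\sigma_{1}(u) > 1$.

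It remains to check that $u$ is a Pisot unit generating $L$. For the generation, suppose $L' := \Q(u) \subsetneq L$. The embeddings $\sigma_{1}, \ldots, \sigma_{d}$ restrict to the embeddings of $L'$, each occurring exactly $[L : L']$ times, so every value among $\sigma_{1}(u), \ldots, \sigma_{d}(u)$ is repeated at least $[L : L']$ times. But $\sigma_{1}(u) > 1$ is the unique value of absolute value exceeding $1$, which forces $[L : L'] = 1$, i.e. $\Q(u) = L$. Consequently the Galois conjugates of $u$ over $\Q$ are exactly $\sigma_{1}(u), \ldots, \sigma_{d}(u)$; with $\sigma_{1}(u) > 1$ and $|\sigma_{i}(u)| < 1$ for $i \ge 2$, the element $\alpha := u$ is a Pisot number, and it is a Pisot unit since $u \in \mathcal{O}_{L}^{\times}$.

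The main obstacle is the existence step, namely producing a unit whose logarithmic image lands in the prescribed open cone $C$. This is exactly where the full rank $d-1$ of the unit lattice, hence Dirichlet's theorem, is essential, and where the total reality of $L$ is used, so that $H$ has the correct dimension and the single ``large'' coordinate is genuinely real. By contrast, the remaining verifications, the sign normalization and the automatic generation of $L$ forced by the uniqueness of the large conjugate, are soft.
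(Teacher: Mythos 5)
Your proof is correct in substance, but it takes a genuinely different route from the paper: the paper does not prove Theorem~\ref{thm21} at all, it simply quotes it from \cite[Theorem 5.2.2]{BDGPS92}. Your argument --- Dirichlet's unit theorem, plus the observation that the full-rank logarithmic unit lattice must meet the open cone $C$, with the generation $\Q(\alpha)=L$ then forced for free by the uniqueness of the conjugate of modulus greater than $1$ --- is essentially the standard proof of that cited result, so what your route buys is a self-contained argument in place of a black-box citation, at the cost of a page of classical algebraic number theory. One assertion is left unjustified but is easily repaired: since the lattice $\Lambda = \lambda(\mathcal{O}_L^{\times})$ spans $H$ over $\R$, its rational span $\Q\Lambda$ is dense in $H$ and hence meets the nonempty open set $C$; writing such a point as $N^{-1}v$ with $v\in\Lambda$ and $N$ a positive integer, the stability of $C$ under positive scaling puts $v$ itself in $C$. (Also, since $C$ is open, density of lattice directions gives the same conclusion; either way the fact is true and standard.)

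Two caveats on scope. First, you prove the statement only for \emph{totally} real $L$, whereas the theorem as stated (and the result in \cite{BDGPS92}) concerns any real number field, i.e.\ any $L\subset\R$, possibly with complex embeddings. This costs nothing for the paper, which applies the theorem only to the totally real field $K$, and your argument extends with trivial bookkeeping: use the usual $(r_1+r_2)$-coordinate logarithmic embedding, take $\sigma_1$ to be the distinguished real embedding $L\subset\R$ (which also makes $\alpha=u>1$ literal), and note that complex embeddings come in conjugate pairs of equal modulus, so the unique coordinate of modulus greater than $1$ must sit at the real place and the same multiplicity argument yields $\Q(u)=L$. Second, your cone argument implicitly requires $d\ge 2$ so that $\Lambda\neq\{0\}$ can reach the open cone; this is harmless because the statement genuinely fails for $L=\Q$ (the only units of $\Z$ are $\pm 1$, so $\Q$ contains no Pisot unit), and both the source theorem and the paper's application, where $[K:\Q]=m\ge 2$, presuppose degree at least two.
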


As $K$ is (totally) real, there is then a Pisot unit $\alpha$ such that $K = \Q(\alpha)$. Consider the linear automorphism of $\C^m$ defined by:
$$\tilde{f}_{\alpha} : \C^d \to \C^d\,\, ;\,\, (z_1, z_2, \ldots, z_m) \mapsto (\varphi_1(\alpha)z_1, \varphi_2(\alpha)z_2, \ldots, \varphi_m(\alpha)z_m)\,\, .$$
As $\alpha$ is a unit in $O_K$ (hence in $O_E$), so are $\varphi_i(\alpha)$ in $\varphi_i(O_E)$. Thus $\tilde{f}_{\alpha}(\varphi_I(O_E)) = \varphi_I(O_E)$ by the definition of $\varphi_I$. Hence $\tilde{f}_{\alpha}$ descends to an automorphism $f_{\alpha}$ of $B$. We set $f := f_{\alpha}$. 

As $K$ is totally real, regardless of $I$, we have 
$$\{\varphi_i(\alpha)\,|\, 1 \le i \le m \} = \{\alpha := \alpha_1, \alpha_2, \ldots , \alpha_m \}\,\, .$$
Here the right hand side is the set of all Galois conjugates of $\alpha$ over $\Q$. By the construction of $f$ from $\tilde{f}_{\alpha}$, the left hand side set also coincides with the set of eigenvalues of $f_*|H^0(B, \Omega_B^1)^{*}$, and therefore, coincides with the set of eigenvalues of $f^*|H^0(B, \Omega_B^1)$. 
As $B$ is an abelian variety, we have
$$H^{1,1}(B) = H^0(B, \Omega_B^1) \otimes  \overline{H^0(B, \Omega_B^1)}\,\, .$$Here $\overline{H^0(B, \Omega_B^1)}$ is the complex conjugate of $H^0(B, \Omega_B^1) \subset H^1(B, \Z) \otimes \C$. As $\alpha$ is real, it follows that $\alpha^2$ is an eigenvalue of the action of $f$ on $H^{1,1}(B)$. Hence 
$$h_{{\rm top}}(f) \ge r_1(f) \ge \alpha^2 > 1\,\, .$$ 
Here the last inequality follows from the fact that $\alpha > 1$. Thus $f$ is of postive entropy. In particular, ${\rm ord}\, (f) = \infty$. Therefore, $f$ is primitive as well by the first part of Theorem \ref{thm1}. This completes the proof of Theorem \ref{thm1}.

\end{document}